\documentclass{amsart}
\usepackage{amsthm, amsmath, amssymb, latexsym, amsbsy,hyperref}
%
\title{Obstructions to Genericity in Study of Parametric Problems in Control Theory}


\author{Viktor Levandovskyy and Eva Zerz}
\address{Lehrstuhl D f\"ur Mathematik, RWTH Aachen, Templergraben 64, D-52062 Aachen, Germany}
\email{[viktor.levandovskyy,eva.zerz]@math.rwth-aachen.de}




\theoremstyle{definition}
\newtheorem{theorem}{Theorem}[section]
\newtheorem{proposition}[theorem]{Proposition}
\newtheorem{lemma}[theorem]{Lemma}

\newtheorem{algorithm}[theorem]{Algorithm}

\theoremstyle{definition}

\newtheorem{definition}[theorem]{Definition}
\newtheorem{example}[theorem]{Example}

\theoremstyle{definition}
\newtheorem{remark}[theorem]{Remark}

\keywords{generic properties, systems with parameters, Gr\"obner bases, parametric Gr\"obner bases, non-commutative Gr\"obner bases}


\DeclareMathOperator{\Ext}{Ext}
\DeclareMathOperator{\Hom}{Hom}
\DeclareMathOperator{\Mat}{Mat}
\DeclareMathOperator{\syz}{Syz}

\newcommand{\K}{{\mathbb K}}
\newcommand{\C}{{\mathbb C}}
\newcommand{\Q}{{\mathbb Q}}

\renewcommand{\d}{\partial}

\DeclareMathOperator{\lm}{lm}

\DeclareMathOperator{\krdim}{Kr.dim}

\begin{document}


\maketitle
\tableofcontents

\begin{flushright}
Dedicated to Michael S. Yakir
\end{flushright}
\section{Introduction}

{\bf Motivation:}
Problems from control theory often involve a set of physical
parameters, for instance, masses, spring constants, and damping coefficients
with mechanical systems,  or resistances, capacitances, and
inductances with electrical circuits.
The structural properties of the control system
may depend crucially on the specific choice of concrete 
parameter values. In many relevant
examples, a system is generically controllable (i.e.,
controllable for almost all possible parameter values),
but becomes uncontrollable when certain relations 
between the parameters are fulfilled. 

It has been shown for many system classes of practical 
interest that controllability amounts
to the torsion--freeness of a module associated to the system.
For example, if $A={\K}[\partial_1,\ldots,\partial_n]$ for a
field $\K$ and $\mathcal{F}=\mathcal{C}^{\infty}(\mathbb{R}^n,{\K})$, the system given by the linear
constant-coefficient partial differential equations 
$R(\partial_1,\ldots,\partial_n)w=0$, where $R\in A^{p\times q}$ and
$w\in \mathcal{F}^q$,
is controllable if and only if $M=A^{1\times q}/A^{1\times p}R$
is torsion-free. 
Then the \textbf{parametric controllability} problem can be formulated as follows:
Given $R\in A^{p\times q}$, where 
$A={\K}(p_1,\ldots,p_t)[\partial_1,\ldots,\partial_n]$
for some parameters $p_1,\ldots,p_t$,
find out whether $M$ is generically  torsion-free, and moreover,
determine the relations among the $p_i$ that will cause torsion
elements in $M$. In general, we pose this question for a left
module $M$ over a (non--commutative) algebra $A$. The antipode notion
of controllability is autonomy, which happens for systems
being torsion. For a corresponding system module $M$, this means
that $M$ is annihilated by a non-zero ideal in $A$.

To a system module $M$ over a system algebra $A$ one associates 
the \textbf{transposed module} $N = N(M)$, defined as follows. Let 
the left module $M$ be presented by a matrix $R \in A^{p \times q}$,
then $N(M)$ is a right module with the presentation 
matrix $R^T \in A^{q \times p}$.

Then, there is an alternative description of torsion--freeness (controllability) and torsion (autonomy) in the language of homological algebra. Namely, \\
$M$ is torsion--free if and only if $\Ext^1_A(N(M),A)=0$, and \\
$M$ is a torsion module if and only if $\Ext^0_A(M,A)= \Hom_A(M,A) = 0$.

For a survey of the correspondence between control systems and
their system modules, see \cite{LZ05} and the references therein,
in particular, the works of Oberst, Pommaret and Quadrat. 
A general approach to parametric modules including the case 
when the parameters are non--constant was introduced in \cite{PQ1997,PQ2000}.
The authors showed that so--called trees of integrability conditions,
depending on parameters of the system, determine the control--theoretic
properties of the system. These trees result into systems of partial
differential equations and nonlinear differential conditions.

The situation described above was our original motivation for
studying parameter-dependent questions of homological algebra
such as the specific problem outlined above. It turned out that 
apart from the concrete application area, it is a challenging 
task for computer algebra to investigate parametric modules, and in 
particular, to get a grip on the special values of parameters
that cause a qualitative change of structural module properties.
These questions reach far beyond the limited set of algebras
that typically arise in control theory. Roughly speaking,
the problem can be tackled from the computational point of view
for virtually every algebra that is accessible to Gr\"obner basis techniques.
The main idea is simple but effective: it consists in a careful monitoring
of denominators of cofactors that appear during Gr\"obner basis computation.
Thus in this article we continue with the investigations, started
in the articles \cite{LZ05, LZ06}.\\

{\bf Outline of the paper:}
In this paper, we give an algorithm for answering the
following \\

{\em Question from Control Theory}. Given a linear system $S$,
depending on a finite number of parameters. Determine
the control--theoretic properties (such as the decomposition
into a controllable and an autonomous part) of $S$ 
for all the values of involved parameters.\\

Since there is, for certain system classes \cite{CQR, LZ06, JFP},
a one--to--one correspondence
between control--theoretic properties of a system $S$
and the homological properties of an associated
module $M(S)$, we can reformulate the question as follows: \\

{\em Question for Computer Algebra}. Given a 
finite presentation of a parametric module $M$ over 
a (non--commutative) algebra $A$, determine the properties (e.g. homological)
of $M$ for all  the values of involved parameters. \\

We present detailed solutions for the bipendulum equations (Example \ref{Bipendulum}),
and for the "two pendula mounted on a cart'' problem, for both negligible friction (Section \ref{Nofriction})
and essential friction (Section \ref{Friction}). The latter problem, to the best of our knowledge, 
has not been yet solved completely in an explicit way. We also present and comment on
several curious examples. Namely, we show the existence of a non--generic controllability 
in the generically autonomous system (Example \ref{exGenAut}), and present a system, 
where both controllability and autonomy properties appear only in the non--generic situation (Example \ref{exNgenAC}).
In the treatment of the case 3 of \ref{Friction}, we illustrate the ability of our method to treat nested obstructions, that is, investigating sub--obstructions of a given obstruction to genericity. \\

{\bf Preliminaries:}
Algebraically speaking, a \textbf{parameter} is a non--zero 
 (and thus invertible) element of the ground field. 
In this article we deal with parameters which mutually 
commute with the elements of the algebra. In other words, the action of operators
of the algebra on parameters is just the commutative multiplication.

In this article, we use the following definition of a property being generic.

\begin{definition}
Let $\mathcal{P}(p_1,\ldots,p_n)$ be a polynomial expression in $p_i$ over some domain $\mathcal{D}$, on which 
a measure $\mu$ exists. The identity $\mathcal{P} = 0$
holds \textbf{generically} in $\mathcal{D}$ if $\mathcal{P}(\xi_1,\ldots,\xi_n)=0$, 
for almost all $(\xi_1,\ldots,\xi_n) \in \mathcal{D}^n$.
 
In other words, $\mathcal{P}=0$ holds in $\mathcal{D}^n \setminus E$,
where $E\subset \mathcal{D}^n$  and $\mu(E)=0$.
\end{definition}

For instance, if $\mathcal{D} = \C$ and $\mathcal{P}$ vanishes on the complement of
a nonzero algebraic set $E\subset \mathcal{D}^n$, then 
$\mathcal{P} = 0$ holds generically in $\mathcal{D}^n$. 
The Krull dimension $\krdim$ of the coordinate ring $\K[V]$ of a variety $V$ can be used for defining a measure on closed subsets $E \subset \mathcal{D}^n$ by assigning 
$\mu(E) = 0$ if $\krdim \K[E] < n$, and $\mu(E)=1$ otherwise.\\

{\bf Notations:} For a matrix $M$, $M^T$ denotes its transposed matrix. By 
${}_{A} \langle F \rangle$ we denote a left $A$--submodule, generated by the finite set $F$. The
subscript $A$ is dropped when $A$ is commutative. For an ideal $I$ in a commutative
ring $\K[x_1,\ldots,x_n]$, we denote by $V(I)\subseteq \K^n$ the set of common zeros of polynomials in $I$.

\section{Genericity of Gr\"obner Bases of Parametric Modules}

Since the major role in computations (of e.g.\ homological properties) 
is played by Gr\"obner bases, we investigate their behaviour in the
case when a ground field involves parameters.

Let $\K$ be a field. Let $A$ be a (non--commutative) algebra over $\K(p_1,\ldots,p_t)$. 
Suppose that in this algebra the notion of algorithmic left Gr\"obner basis
exists (e.g. $A$ can be a ring of solvable type \cite{Kr} or, more restrictively,
an Ore algebra \cite{CQR}).

Let us recall the definition of a ring of solvable type.

\begin{definition}
Let $K \supseteq \K$ be a skew field and let $R':=K[x_1,\ldots,x_n]$ be a commutative ring over $K$. 
Suppose that $\prec$ is a fixed term ordering on $R'$. 
Let $R$ be a ring generated by $\{x_1,\ldots,x_n\}$ subject to the new multiplication $*$.  
If the properties 1 and 2 below hold and $(R,*)$ is an associative ring, $R$ is called a \textbf{ring of solvable type}.


\textbf{1.} $\forall 1\leq i<j \leq n$, $x_i * x_j = x_i x_j$ and $x_j*x_i = c_{ij} x_i x_j + p_{ij}$, where $0\not=c_{ij}\in K$ and $p_{ij}\in R'$,
such that $\lm(p_{ij}) \prec x_i x_j$,

\textbf{2.} $\forall 1\leq i \leq n$, $\forall a\in K$, $a* x_i = a x_i$ and $x_i*a = c_{ai} a x_i + p_{ai}$, where $0\not=c_{ai}\in K$ and $p_{ai}\in K$.
\end{definition}

Good examples of rings of solvable type are the rings of (partial) differential--difference operators. \\

The elements of a free module $A^m$ are represented as the vectors 
$\bar{t} = \sum_{k=1}^m t_i e_i$, where $t_i \in A$, and $e_i$ is the $i$--th canonical basis vector. By $\bar{0}$ we denote the zero vector 
$(0,\ldots,0)^T \in A^m$. The set of vectors $\bar{t}_1,\ldots,\bar{t}_l$, for instance the set of generators of a submodule of a free module $A^m$, will be often identified with the matrix $T \subset \Mat(m \times l, A)$. A single vector $\bar{t}_i$ corresponds to the $i$--th column of $T$ and vice versa. \\

Given a monomial well--ordering $\prec$ on $A$, there are several ways to extend
it to a monomial module ordering $\prec_M$ on $A^m$, that is, an ordering consisting
of two components $(\prec, \prec_C)$, where $\prec_C$ is an ordering on
the components $e_i$. In the following, we need a so--called
\textbf{term over position ordering}, that is, $m_1 e_i \prec_M m_2 e_j $ if and only if $m_1 \prec m_2$ or, if $m_1 = m_2$, then $e_i \prec_C e_j$ for monomials $m_i \in A$.\\

Recall, that a \textbf{left syzygy} of a finite set of elements $\{f_1,\ldots,f_m\}$, $f_i \in A$, is a tuple $(b_1,\ldots,b_m)^T \in A^m$, such that $b_1 f_1 + \dots + b_m f_m = 0$. The set of all left syzygies of a given set of $m$ elements is a left submodule of $A^m$. It is often denoted as $\syz(\{f_1,\ldots,f_m\})$. \\

In this article, we work with left submodules, left syzygies etc. 
It is clearly possible to do the same also from the right. However, two--sided (bimodule) problems deserve,
except for the commutative case, a fairly distinct treatment. Most (if not all) problems, originating from 
applications of e.g.\ control theory, are formulated in terms of left modules.

\subsection{\textsc{Lift} and \textsc{LeftInverse} Algorithms}

\begin{proposition}
\label{PropLift}
Suppose that a left submodule $L$ of a free module $A^m$ is generated by
the set of column vectors $F=\{\bar{f}_1,\ldots,\bar{f}_l\}\subset A^m$. 
Consider the set $\widetilde{F} := \{\bar{f}_1 + e_{m+1}, \ldots, \bar{f}_l + e_{m+l}\}$
and assume, that the fixed ordering $\prec$ on $A^m$, naturally extended to the ordering $\prec_l$ on $A^{m+l}$,
satisfies $x^{\alpha} e_{m+i} \prec_l x^{\beta} e_j$, for all $1\leq i \leq l$, $1\leq j\leq m$ and for all $\alpha, \beta$.
Suppose that the 
left Gr\"obner basis $\widetilde{G}$ of $\widetilde{F}$ is finite. Then 
we reorder the columns of $\widetilde{G}$ in such a way, that the
elements, whose first $m$ components are zero, are moved to the left.
This process is schematically presented in the following picture:
\[
\widetilde{F} = 
\left(
\begin{array}{ccc}
\bar{f}_1 & \ldots & \bar{f}_l  \\
1   &        & 0  \\
    & \ddots & \\ 
0 & & 1 \\ 
\end{array}\right)
\; \overset{\textsc{leftGB}}{\longrightarrow} \;
\left(
\begin{array}{ccc|ccc}
\bar{0} & \ldots & \bar{0} \ & \bar{h}_1 & \ldots & \bar{h}_t \\
\hline
  & & \  &     &        & \\
  & {\displaystyle \bf S} & \  &     & {\displaystyle \bf  T}       & \\
  & & \  &     &        & \\
\end{array}\right)
= \widetilde{G}.
\]

Let $H=\{\bar{h}_1, \ldots, \bar{h}_t\}$ be a left Gr\"obner basis of $F$. 
Recall that we identify $F$ with the matrix $(\bar{f}_1,\ldots,\bar{f}_l)\in A^{m\times l}$,
 $T$ with an ${l \times t}$ matrix over $A$, and $H$
 with the matrix $(\bar{h}_1, \ldots, \bar{h}_t)\in A^{m\times t}$. Then

$\bullet$  $\mathbf{T}$ is a left transformation matrix between two generating sets of $F$, 

\hspace*{0.2cm} that is $H^T = \mathbf{T}^T F^T$ holds,

$\bullet$ the columns of $\mathbf{S}$ form a left Gr\"obner basis of \; $\syz(\{\bar{f}_1, \ldots, \bar{f}_l\})$.
\end{proposition}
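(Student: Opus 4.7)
The plan is to analyze the augmented module $\widetilde{L} := {}_{A}\langle \widetilde{F}\rangle \subseteq A^{m+l}$ and exploit the special form of the ordering $\prec_l$ that places the components $e_{m+1},\ldots,e_{m+l}$ below the components $e_1,\ldots,e_m$.

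First I would describe $\widetilde{L}$ explicitly. Every element of $\widetilde{L}$ has the shape
$\sum_{i=1}^l b_i (\bar{f}_i + e_{m+i}) = (\sum_i b_i \bar{f}_i,\, b_1,\ldots,b_l)^T$
for some $b_i \in A$, and conversely every such tuple lies in $\widetilde{L}$. Consequently the projection $\pi_1 : A^{m+l}\to A^m$ onto the first $m$ components sends $\widetilde{L}$ onto $L$, while the kernel of $\pi_1$ restricted to $\widetilde{L}$ is precisely $\{(\bar{0}, b_1,\ldots,b_l)^T : (b_1,\ldots,b_l)^T \in \syz(F)\}$. In particular, every element of $\widetilde{G}$ encodes the data we want: an element $(\bar{h}_k, \bar{t}_k)^T$ with nonzero upper part yields a cofactor relation $\bar{h}_k = \sum_i (\bar{t}_k)_i\, \bar{f}_i$, which is exactly the claim $H^T = \mathbf{T}^T F^T$; an element $(\bar{0}, \bar{s}_k)^T$ gives a genuine syzygy of $F$.

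Next I would use the ordering to justify the split shown in the picture. By the assumption $x^\alpha e_{m+i} \prec_l x^\beta e_j$ for $1\le i\le l$, $1\le j\le m$, the leading term of any vector in $A^{m+l}$ whose upper $m$-block is nonzero must sit in that upper block. Thus the elements of $\widetilde{G}$ split cleanly into those with zero upper block (columns of $\mathbf{S}$) and those with nonzero upper block (columns of $(\bar{h}_k,\bar{t}_k)^T$). This makes the reordering in the picture well-defined.

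Now I would prove the two bulleted statements by a lift-and-descend argument. For the Gröbner basis claim: given $\bar{y}\in L$ with leading monomial $x^\alpha e_j$, $j\le m$, pick any cofactor representation $\bar{y} = \sum_i b_i \bar{f}_i$ and lift to $\widetilde{y} := (\bar{y}, b_1,\ldots,b_l)^T \in \widetilde{L}$. By the choice of $\prec_l$ the leading monomial of $\widetilde{y}$ equals $x^\alpha e_j$, so it is divisible by the leading monomial of some $\widetilde{g}\in\widetilde{G}$; since the leading monomial again lies in the upper block, $\widetilde{g}$ is one of the $(\bar{h}_k,\bar{t}_k)^T$, and its leading monomial is exactly the leading monomial of $\bar{h}_k$. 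Hence $H$ is a left Gröbner basis of $L$. For the syzygy claim: given $\bar{b}\in\syz(F)$, the vector $(\bar{0},\bar{b})^T\in \widetilde{L}$ has its leading term in the lower block; by the Gröbner basis property of $\widetilde{G}$ this leading term is divisible by the leading term of some $\widetilde{g}\in\widetilde{G}$, and by the ordering that $\widetilde{g}$ must itself have zero upper block, i.e.\ $\widetilde{g}$ is a column of $\mathbf{S}$. Hence the columns of $\mathbf{S}$ form a left Gröbner basis of $\syz(F)$.

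The main technical obstacle is the lift-and-descend step: one has to verify that lifting a cofactor representation does not change the leading monomial and that the reduction relation matches between $A^m$ and $A^{m+l}$. Both issues are resolved by the term-over-position property of $\prec_l$, which is the one hypothesis doing all the work in the argument.
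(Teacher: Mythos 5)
Your proposal is correct and takes essentially the same approach as the paper: lift into the augmented module $A^{m+l}$, use the elimination-style position of $e_{m+1},\ldots,e_{m+l}$ to split $\widetilde{G}$ into a syzygy block and a cofactor block, and descend. The only addition is that you also prove that $H$ is a left Gr\"obner basis of $L$, which the paper simply asserts; otherwise the arguments coincide.
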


\begin{proof}

\[
\text{Since } \bar{h}_i = \sum_{k=1}^l a_{ik} \bar{f}_k, \text{ we have }
\sum_{k=1}^l a_{ik} (\bar{f}_k+e_{m+k}) = \bar{h}_i + \sum_{k=1}^l a_{ik} e_{m+k}.
\]

Hence, the $i$--th column of $\bf{T}$ is $(a_{i1},\ldots,a_{il})^T$, and 
$H^T = (\bar{h}_1, \ldots,\bar{h}_t)^T = \mathbf{T}^T \cdot F^T$.\\

Let $\mathbf{S} = \{ \bar{s}_1, \ldots, \bar{s}_r\}$ and 
$\widetilde{S} := \{ (\bar{0},\bar{s}_1)^T, \ldots, (\bar{0},\bar{s}_r)^T\}$.
Since $\widetilde{G}$ is a left Gr\"obner basis of $\widetilde{F}$,
for any $f$ in ${}_{A} \langle \widetilde{F} \rangle \cap (\{\bar{0}\}^m \times A^l)$ 
$= {}_{A} \langle \widetilde{F} \rangle \cap \oplus_{k=m+1}^{m+l} A e_k$
there exists $g \in \widetilde{G}$, such that $\lm(g)$ divides $\lm(f)$. Then 
$\lm(g) \in \{\bar{0}\}^m \times A^l$, hence, by the property of the ordering, 
$g \in \widetilde{G} \cap (\{0\}\times A^l)  = \widetilde{S}$. Thus 
$\widetilde{S}$ is a left Gr\"obner basis of $\widetilde{F} \cap (\{\bar{0}\}^m \times A^l)$
and, in particular, $\widetilde{S}$ generates the latter. Since

\[
\sum_{k=1}^l b_{k} (\bar{f}_k+e_{m+k}) = \sum_{k=1}^l b_{k} e_{m+k} 
\; \text{ holds if and only if } \; \sum_{k=1}^l b_{k} \bar{f}_k = \bar{0},
\]

$\mathbf{S}$ consists of columns $(b_1,\ldots,b_l)^T$, which are the syzygies of the set $\{\bar{f}_1, \ldots, \bar{f}_l\}$.
\end{proof}

\begin{remark}
Clearly, for a Noetherian algebra $A$ the algorithm terminates.

More generally, if the left Gr\"obner basis of $F$ is finite, 
we get the transformation matrix in finitely many steps. Namely,
in the generalized Buchberger's algorithm for computing left Gr\"obner basis, 
we do not consider $S$--polynomials between elements whose leading monomials include components greater than $m$.

If the algebra $A$ is commutative, the transformation matrix property translates
into $H = F \cdot \mathbf{T}$.
\end{remark}


We call the algorithm computing the transformation matrix as above \textsc{Lift}$(F,H)$. 
Note that with this algorithm we are able to trace any computation which uses Gr\"obner bases. It is worth mentioning that Proposition \ref{PropLift} shows, that with basically one Gr\"obner basis computation we can get three important objects, namely a Gr\"obner basis of a module, a Gr\"obner basis of the first syzygy module and a transformation matrix. These three applications are sometimes called \textbf{Gr\"obner trinity} and play a fundamental role in computer algebra.

%

Many problems in control theory involve parameters, which are known to 
be non--zero, or even strictly positive, for physical reasons. However,
it might happen that the vanishing of certain algebraic 
expressions in the parameters has a direct impact on the
control--theoretic properties. Very often we observe
generically controllable parametric systems which, for some values of parameters, become uncontrollable.

As a further application of the algorithm \textsc{Lift}, we compute a left inverse of a given polynomial matrix in the case it exists. Below, the algorithm \textsc{rmLeftGroebnerBasis}$(M)$ computes the monic reduced minimal left Gr\"obner basis of a submodule $M$, which is unique for a fixed ordering (\cite{Kr, LVdiss}).

\newpage

\begin{algorithm}
\textsc{LeftInverse}(matrix M) \\
\textit{Input:} ${ }_{ }$ $M \in \Mat_{ m\times n}(A)$ \\
\textit{Output:} $L \in \Mat_{ n\times m}(A)$, such that $L \cdot M = Id_{n \times n}$

\hspace{1.1cm} or  $0\in \Mat_{1\times 1}(A)$, if no left inverse exists\\
${}_{}$\\
module $G := $ \textsc{rmLeftGroebnerBasis}$(M)$\\
{\bf if} $G \not = Id_{n \times n}$ {\bf then}

\hspace{0.6cm} {\bf report} "No left inverse exists" 

\hspace{0.7cm}{\bf return} $0$\\
{\bf endif}\\
${}_{}$\\
module $N := $ \textsc{Transpose}($M$) \\
module $K := $ \textsc{Lift}($N,Id_{n \times n})$ \\
{\bf return} \textsc{Transpose}($K$)
\end{algorithm}

\begin{proof}
The algorithm \textsc{LeftInverse} terminates as soon as \textsc{Lift} does.
Note that $LM = Id_{n \times n}$ can happen only in the case  
when the monic reduced minimal left Gr\"obner basis of a free submodule generated by the columns $\{N_j\}$ of $N=M^T$ is equal to $Id_{n \times n}$. 

In the setup of the \textsc{Lift} algorithm, we use $H = Id_{n \times n}$. Denote by $K$
the result of \textsc{Lift}($N,Id_{n \times n})$. Then, by the Proposition \ref{PropLift}, 
$Id_{n \times n} = Id_{n \times n}^T = K^T M^T = K^T N$. Hence, for $L =  K^T$ we have $LM=Id_{n \times n}$.
\end{proof}

The existence of a left inverse (or, more generally, a generalized inverse $G$, such that
$G \cdot M \cdot G = G$) often gives us the information on genericity of parameters. Namely,
one analyzes the possible vanishing of denominators of a generalized inverse, as it is done in e.g. \cite{CQR}. In the special case where 
$A = \K[\d]$ is a principal ideal domain, consider the module $M = A^{1 \times q} / A^{1 \times p} R$. Without loss of generality, 
we can assume $R$ has full row rank. Then $M$ is torsion--free if and only if there exists a right inverse to $R$.

As we have shown, computing the inverse is a special case of computing the transformation matrix with the algorithm \textsc{Lift}. In comparison with \textsc{LeftInverse}, \textsc{Lift} allows us to deal effectively with more general problems.

We call the polynomials in parameters, whose vanishing implies the failure of generic properties, \textbf{obstructions to genericity}. We can compute them as described above using the \textsc{Lift} algorithm. 



There is a need for complete information on the parametric module. It consists 
of the list of properties, computed for the generic and all the non--generic cases.
In the context of generically controllable problems, we are interested in computing e.g. an
annihilator of a torsion submodule for the each non--generic case. Thus, we need to stratify
the set of obstructions.

\subsection{Stratification of Obstructions to Genericity}

Let $\K$ be a field of characteristic 0.
Recall that a set is called \textbf{locally closed}, if it is a difference of two closed sets. A finite union of locally closed sets is called a \textbf{constructible} set.

Suppose we are given a set of polynomials $P = \{p_1,\ldots,p_n\} \subset \K[a_1,\ldots,a_m]$,
which are irreducible over $\K$.

We associate to $P$ a set $C(P):=\{ \bar{\xi}= (\xi_1,\ldots,\xi_m) \in \K^m \mid \prod_{i=1}^n p_i (\bar{\xi}) = 0\}$.

\newpage

\begin{lemma}
\label{Strat}
The set $C(P)$ is constructible. 
\end{lemma}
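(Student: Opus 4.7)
The plan is to observe that $C(P)$ is in fact closed, and hence a fortiori constructible. The key algebraic fact is that a product of elements in a field vanishes if and only if at least one factor vanishes, so
\[
C(P) \;=\; V\!\left(\prod_{i=1}^n p_i\right) \;=\; \bigcup_{i=1}^n V(p_i).
\]
Each $V(p_i)\subseteq \K^m$ is by definition a Zariski-closed subset (the zero set of a single polynomial).

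Next I would recall the definitions: a closed set $Y$ is trivially locally closed, since $Y = Y \setminus \emptyset$ and $\emptyset$ is also closed. Therefore each hypersurface $V(p_i)$ is locally closed. A finite union of locally closed sets is constructible by definition, so the displayed expression for $C(P)$ exhibits it as a constructible set, which is exactly the claim.

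I expect no genuine obstacle here; the irreducibility hypothesis on the $p_i$ and the characteristic-$0$ assumption are not needed for constructibility itself (they will matter later when this decomposition is refined into a true stratification of obstructions). If desired, the proof can be made more suggestive for what follows in the subsection by remarking that one obtains a disjoint stratification into locally closed pieces by taking, for each nonempty subset $I \subseteq \{1,\ldots,n\}$, the locally closed stratum
\[
S_I \;:=\; \bigcap_{i\in I} V(p_i)\,\setminus\, \bigcup_{j\notin I} V(p_j),
\]
whose union over $I$ recovers $C(P)$; but for the statement as given, the two-line argument above already suffices.
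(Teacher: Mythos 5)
Your proof is correct, and it is genuinely simpler than the paper's. You observe that $C(P) = V(\prod_i p_i) = \bigcup_i V(p_i)$ is already Zariski-closed, hence trivially locally closed and a fortiori constructible; the irreducibility of the $p_i$ and the characteristic-$0$ hypothesis play no role. The paper instead proves constructibility by exhibiting $C(P)$ as a \emph{disjoint} union of $2^n-1$ locally closed strata, indexed by ordered partitions $(\Lambda',\Lambda'')$ of $\{1,\ldots,n\}$ with $\Lambda'\neq\emptyset$, each stratum being $\bigcap_{j\in\Lambda'} V(p_j) \setminus \bigcup_{k\in\Lambda''} V(p_k)$. That is exactly the refinement you sketch in your final paragraph as an optional remark. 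The trade-off is clear: your argument settles the lemma as stated in two lines, while the paper's longer proof is really constructing the stratification that the subsequent algorithm \textsc{StratifyLC} and the control-theoretic case analysis depend on, so the ``proof'' is doing double duty as a definition of the strata. Either presentation is mathematically sound; yours is the cleaner proof of the bare statement, the paper's is the more informative one for what follows.
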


\begin{proof}

Let $\Omega :=  \{ (\Lambda', \Lambda'') \mid \Lambda' \cup  \Lambda'' = \{1,\ldots,n\}, \Lambda' \cap  \Lambda''=\emptyset\}$ be the set of all divisions of
$\{1,\ldots,n\}$ into two disjoint complementary subsets.
Let, furthermore, $\Sigma := \Omega \setminus (\emptyset,\{1,\ldots,n\})$. 
Then, 
\[
C(P) = \bigcup_{(j,k)\in \Sigma} \{ \bar{\xi} \mid \forall j \in \Lambda' \; p_j( \bar{\xi})=0, \quad \forall k \in \Lambda'', \; p_k( \bar{\xi}) \not=0 \} =
\]
\[
= \bigcup_{(j,k)\in \Sigma} 
V(\langle \{ p_j \mid j \in \Lambda' \} \rangle) \setminus 
V(\langle \{ p_k \mid k \in \Lambda'' \} \rangle) = 
\]
\[
= \bigcup_{(j,k)\in \Sigma} 
 \cap_{j \in \Lambda'} V(\langle p_j \rangle) \setminus 
\cap_{k \in \Lambda''} V(\langle p_k \rangle),
\]

and, indeed, we see that $C(P)$ is a disjoint union of locally closed sets.
Note that in $C(P)$ there is a closed subset $\cap_i V(p_i)$; the rest of subsets are locally closed.
\end{proof}

It is convenient to represent $C(P)$ as a binary tree, where the vertices 
are the decision points, associated to polynomials $p_i$, and the
edges represent the logical conditions $(p_i = 0)$ and $(p_i \not = 0)$, respectively.
In such a way it is easy to see, that starting from $n$ elements in
the set $P$, we will have $2^n - 1$ algebraic systems describing
the locally closed components of $C(P)$. 

Given two ideals $I,J \in \K[a_1,\ldots,a_m]$, an algebraic data describing a locally closed set $V(I) \setminus V(J) = V(I) \setminus (V(I) \cap V(J))$ can be computed with a factorizing Gr\"obner basis algorithm (e.g. \cite{GPS}). Such an algorithm takes $I,J$ as input and returns a list of ideals, where the zero set of the intersection of the output ideals is contained in the $V(I)$ and contains the complement of the $V(J)$ in $V(I)$. We refer to this algorithm as to \textsc{FactGB}$(I,J)$.

\begin{example}
Let $P = \{p_1,p_2 \}$, then the binary tree for $C(P)$ consists of the following 3
systems of equations and inequations:
$\{ p_1 =0, p_2 = 0 \}$, $\{ p_1 \not=0, p_2 = 0 \}$ and
$\{ p_1 =0, p_2 \not= 0 \}$.

Denote $V_i:=V(\langle p_i \rangle)$ for $i=1,2$, and $V_{12} := V(\langle p_1,p_2 \rangle) = V_1 \cap V_2$. Then, the decomposition of $C(P)$ can be written as 
$V_{12} \uplus (V_1 \setminus V_{2})  \uplus (V_2 \setminus V_{1}) = V_{12} \uplus (V_1 \setminus V_{12})  \uplus (V_2 \setminus V_{12})$, where $\uplus$ denotes the disjoint union.

Computationally, we need to compute the Gr\"obner basis of an ideal $I_{12}:=\langle p_1,p_2 \rangle$, and two lists $L_i :=$ \textsc{FactGB}$( \{ p_i \}, I_{12})$, obtained with the factorizing Gr\"obner basis algorithm, which describe $V_i \setminus V_{12}$.

\end{example}

Given a set of polynomials $\{f_1,\ldots,f_s\}\subset \K[a_1,\ldots,a_m]$, we factorize them and form a set of pairwise different irreducible factors $P:=\{p_1,\ldots,p_n\}$.
We sort $p_i$ by using a positively graded degree ordering, starting with the smaller elements. With such an ordering, it is easier to compute with locally closed sets.
Namely, the bigger elements will often reduce to simpler polynomials with respect to the smaller elements. Thus, also the detection of empty components 
(that is, systems with no solutions) can be achieved faster.

Lemma \ref{Strat} is constructive indeed. Together with the presentation of locally closed sets using the algorithm \textsc{FactGB} above, we call the whole procedure 
\textsc{StratifyLC}(list $L$). It takes a finite list of irreducible polynomials on the input
and returns a list of systems of equations and inequations, corresponding to $C(P)$.




\subsection{The \textsc{Genericity} Algorithm}

Let $A$ be a $\K$--algebra and suppose that the coefficients of a given system $S$ involve parameters $p_1,\ldots,p_t$.
We interpret the parameters as generators of the transcendental field extension of $\K$ and 
we use the natural $\K(p_1,\ldots,p_t)$--algebra structure on $A$.

\begin{algorithm}
\label{AlgGenericity}
\textsc{Genericity}(matrix M) \\
Assume, that a monomial module ordering on the algebra $A$ is fixed. \\
\textit{Input:} ${ }_{ }$ $M \in \Mat_{ m\times n}(A)$ \\
\textit{Output:} $\{h_1,\ldots, h_s\} \subset \K[p_1,\ldots,p_t]$, such that if a specialization of the parameters implies $h_i(p_1,\ldots,p_t) = 0$, then a left Gr\"obner basis of $M$ is different from the generic one \\
${}_{}$\\
module $G := $ \textsc{rmLeftGroebnerBasis}$(M)$ \qquad \verb?\\? $G=\{\bar{g}_1,\ldots,\bar{g}_{\ell}\} \in \Mat_{ m\times \ell}(A)$ \\
matrix $T := $ \textsc{Lift}($M,G)$ \qquad \qquad \qquad \qquad \qquad \quad \verb?\\? $T \in \Mat_{ n\times \ell}(A)$ \\
list $S$, $H$; \; int $i,j$ \\ 
for $j=1$ to $\ell$
\begin{itemize}
\item[] $i:=$ the leading component of $\bar{g}_j$
\item[] for $k=1$ to $n$ 
\begin{itemize}
\item[] if $(M_{ik} \not=0)$ then
\item[] \qquad if $(T_{kj} \not=0)$ then \; $S:= S \cup $ \textsc{Denominator}$(T_{kj})$
\item[] \qquad end if
\item[] end if
\end{itemize}
\item[] end for
\end{itemize}
end for \\
if ($S\not=$ empty list) then
\begin{itemize}
\item[] $H := $ \textsc{Factorize}$(S)$
\item[] $H := $ \textsc{Simplify}$(H)$
\end{itemize}
else $H := $ empty list \\
{\bf return} $H$
\end{algorithm}

\begin{proof}
The algorithm \textsc{Factorize}(list $L$) returns a list of monic factors of
every polynomial of the list $L$. The algorithm \textsc{Simplify}(list $L$)
 refines a list $L$ by removing doubled appearances of same elements. We may assume it also
 sorts $L$ by an ordering, putting with the smaller elements in the beginning of the output.

The algorithm \textsc{Genericity} terminates as soon as \textsc{Lift} terminates.
Now, we prove the correctness. Suppose that the leading term of $\bar{g}_j$ lies in the $i$--th
module component. From the property $G^T = T^T M^T$ it follows, that there is a 
presentation of the element $G_{ij} \in A$ as the sum 
\[
G_{ij} = \sum_{k=1}^{n} T_{kj} M_{ik} = \sum_{k=1}^{n} T_{jk}^T M_{ki}^T.
\]

Hence, it suffices to collect only the 
denominators of $T_{kj} \not=0$ with $M_{ik} \not=0$,
since only such elements contribute to the leading coefficient of $\bar{g}_{j}$.

If some leading coefficient of the unique generic Gr\"obner basis vanishes for some specialization of parameters, then the Gr\"obner basis under such a specialization is different from the generic one. 
\end{proof}

Note that with the algorithm we obtain the expressions in the parameters which lead to non--generic Gr\"obner bases. In order to obtain Gr\"obner bases under specialization, provided by $h_i$, one cannot use the generic Gr\"obner basis. Instead, one has to compute the specialized Gr\"obner basis from scratch.

Suppose that the output of \textsc{Genericity} is the list of irreducible polynomials $H$. In practice, we exclude from $H$ the polynomials, which do not satisfy the problem--specific constraints for e.g. physical admissibility like non--negativity. 
Then, we apply the algorithm \textsc{StratifyLC}($H$) and 
obtain a complete stratification of a given system with respect to its parameters. 


\subsection{Comparison with Other Methods}

\subsubsection{Comprehensive Gr\"obner bases}

Comprehensive Gr\"obner bases (see e.g. \cite{Weispfenning2003}) were introduced by Weispfenning and generalized to rings of solvable type by Kredel \cite{Kr}.

A comprehensive Gr\"obner basis, by definition, is a finite subset $G$ of a parametric polynomial ideal $I$ such that  $\sigma(G)$ constitutes a Gr\"obner basis of the ideal generated by $\sigma(I)$ under all specializations $\sigma$ of the parameters in arbitrary fields (\cite{Weispfenning2003}).\\

The construction of a comprehensive Gr\"obner basis follows the lines of Buchberger's algorithm. However, the result will be a union of trees of ideal bases (called Gr\"obner systems), where each basis is accompanied with a set of conditions of parameters. Being a powerful theoretical instrument, comprehensive Gr\"obner bases are quite complicated to compute. To the best of our knowledge, there is no implementation yet, which is able to treat serious examples.\\

In our approach we separate two processes, which are unified in the comprehensive Gr\"obner basis method. Namely, we compute the tree of sets of conditions of parameters \textit{after} the Gr\"obner basis and transformation matrix computations. In such a way we avoid repeated computations in trees of ideals and sets of conditions, which might occur during the computation of a comprehensive Gr\"obner basis. 

\subsubsection{The Leykin--Walther Method}

The method has been formalized by Leykin for the case of ideals \cite{Leykin01} and has
been generalized to modules by Walther \cite{W03}. The idea behind the method has
been used before, however Leykin and Walther formulated and proved the whole framework in
a complete way. In the following, we reformulate the Lemma 2.3 from \cite{W03}.\\

Let $\K$ be a field of characteristic 0.
Given a $\K$--algebra $A$, we consider parameters as new commutative variables and perform
further computations in the $\K$--algebra $\widetilde{A}:=A \otimes_{\K} \K[p_1,\ldots,p_m]$.
We use in $\widetilde{A}$ an elimination ordering $\prec_A$ for the variables of $A$. 
Such an ordering is characterized by the property $p_1^{\alpha_1} \dots p_m^{\alpha_m} \prec_A t$, for any monomial $t\in A$ and any $\alpha \in \mathbb{N}^m$. \\

Let $G = \{g_1,\ldots,g_{\ell}\}$ be a reduced Gr\"obner basis for the left submodule 
$N \subset \widetilde{A}^s$ with respect the position over term ordering, induced by $\prec_A$ on $\widetilde{A}^s$. Let, moreover, $Q_N \subset \K[p_1,\ldots,p_t]$ be the ideal 
$\{ p \in \K[p_1,\ldots,p_t] \mid p\widetilde{A}^s \subseteq N\}$. 
For $g_i \not= Q_N \widetilde{A}^s$, multiply all the leading coefficients with respect to $\K[p_1,\ldots,p_t]$ of such $g_i$ and denote the result by $h$. Let $\sigma: \K[p_1,\ldots,p_t] \to \K$ be a specialization, then if $\sigma(h)\not=0$, then $\sigma(G) = \{\sigma(g_1),\ldots,\sigma(g_t)\}$ is a Gr\"obner basis. \\

This method has some drawbacks in practice. Suppose that the number of parameters is big and there are many obstructions, which appear in several components as, say, leading coefficients by a monomial $1$. This situation is typical for generically controllable systems. Then, using the method of Leykin--Walther, we are forced to compute Gr\"obner basis of a submodule of elements as described above, whereas a better solution would be just to collect the leading coefficients in parameters. Secondly, in a similar situation we get many elements in Gr\"obner basis and the analysis of the impact of obstructions, e.g. the computation of the stratification, becomes very involved. \\

On the other hand, this method allows us to handle the cases, when the parameters
satisfy algebraic identities between themselves or when there are more general identities,
involving both variables and parameters. We believe, that this method will be enhanced in
order to overcome the described difficulties.



\section{Implementation of Algorithms}

The described method for detecting the obstructions to
genericity of parametric modules is implemented in the procedure \texttt{genericity} of control theory toolbox \textsc{control.lib} \cite{Controllib}, which is realized as a library
in the computer algebra system \textsc{Singular} \cite{Singular}. \textsc{Singular} is the specialized computer algebra system for polynomial computations, well--known for its high performance (especially in Gr\"obner bases--related computations) and rich functionality. It uses intuitive C--like programming language, in which the libraries are written. It is important to mention, that \textsc{Singular} is distributed under GPL license, that is, it is free for academic purposes. \\

The current implementation of the procedure \texttt{genericity} works in a little different way, compared with the Algorithm \ref{AlgGenericity}. Namely, it takes as input a matrix $T$, which is assumed to be the result of the \textsc{Lift} algorithm.
This minor modification allows us to compute the data, which are independent from the choice of a monomial module ordering.
The output of the procedure \texttt{genericity} is a list of strings and thus it is ring--independent.
In the first item of the list the names of parameters, by which we have divided in the algorithm, are collected. Every further item of the list contains a single non--trivial polynomial in the parameters.


There are several algorithms in \textsc{Singular}, which compute (left) Gr\"obner bases 
of modules over commutative polynomial algebras and non--commutative $GR$--algebras \cite{LVdiss, LS}.
It is recommended to use the heuristic routine \texttt{groebner}, which often provides the best match for a concrete example. For more details on \textsc{Singular}, consult with the book \cite{GPS} and with the website of the system \cite{Singular}, which contains among other the online documentation. The algorithm \textsc{FactGB} is implemented in \textsc{Singular} and is accessible via the function \texttt{facstd}. 


In the library \textsc{control.lib}, we have implemented several functions for 
supporting the research in systems and control theory. Among
others, there are the procedures \texttt{LeftInverse} and \texttt{LeftKernel}, 
their counterparts
\texttt{RightInverse} and  \texttt{RightKernel}, as well as \texttt{canonize} 
and \texttt{iostruct}.

The main purpose of the library is to provide maximal relevant information 
based on a simple input. This principle led us to the development of heuristic procedures
\texttt{control} and \texttt{autonom}, which use homological computations. 
Respectively, for systems with a full row rank presentation matrix, 
there are dimension--guided procedures \texttt{controlDim} and \texttt{autonomDim}. 

Given a system algebra and a system module over it, both procedures 
compute relevant properties of a given module from the point of view of controllability
(with the procedure \texttt{control} or \texttt{controlDim}) or autonomy analysis (with the 
procedure \texttt{autonom} or \texttt{autonomDim}). The procedure \texttt{canonize} takes
the output of either \texttt{control} or \texttt{autonom} procedure and computes reduced and tail--reduced Gr\"obner bases of the objects,
thus simplifying and \textit{canonizing} the output.

We illustrate the functionality of the library and the flexibility of \textsc{Singular} with the following example.

\begin{example}
\label{Bipendulum}
Consider a bipendulum, that is, a system, describing a bar with two fixed pendula of length $\ell_1$ and $\ell_2$ respectively (e.g. \cite{JFP, OreModules}). The system algebra is a commutative algebra in variable $\d$ over a field of rationals with parameters $g, \ell_1, \ell_2$, that is, $\Q(g, \ell_1, \ell_2)[\d]$.
A system module is presented via the matrix
$\begin{pmatrix} \d^2+\frac{g}{\ell_1} & 0 & -\frac{g}{\ell_1} \\ 0 & \d^2+\frac{g}{\ell_2} & -\frac{g}{\ell_2} \end{pmatrix}$. We run the following code in a \textsc{Singular} session.
\small
\begin{verbatim}
LIB "control.lib";
option(redSB); option(redTail);
\end{verbatim}
\normalsize

With the \texttt{LIB} command we load the library. The \texttt{option} commands
tell \textsc{Singular} to compute reduced bases (\texttt{option(redSB)}),
and also reduce not only leading terms, but any terms in the occurring polynomials
 (\texttt{option(redTail)}). \\

It is important to mention, that any polynomial computation in \textsc{Singular} requires the
definition of a ground ring.
\small
\begin{verbatim}
ring r1 = (0,g,l1,l2),(d),(c,dp);
module RR = [d^2+g/l1, 0, -g/l1], [0, d^2+g/l2, -g/l2];
\end{verbatim}
\normalsize

The ring we set bears the name \texttt{r1}, it has $\Q(g, \ell_1, \ell_2)$ as the ground field 
(0 stands for the characteristic of a field, \texttt{g, l1, l2} is a list of names for parameters), and the only variable \texttt{d}. The last
comma--separated block describes the monomial module ordering on \texttt{r1}. In this case
\texttt{(c,dp)} means the following. We use the descending ordering \texttt{c} on the module components 
 and the degree reverse lexicographical ordering \texttt{dp} on the monomials in the same component.
\small
\begin{verbatim}
module R = transpose(RR);
list L = canonize(control(R));
L;
\end{verbatim}
\normalsize

We have to transpose the module $R$, because \textsc{Singular} takes the columns of 
a given matrix presentation as the generators of a module. Here is the output
of \textsc{Singular}:
\small
\begin{verbatim}
[1]:
   number of first nonzero Ext:
[2]:
   -1
[3]:
   strongly controllable(flat), image representation:
[4]:
   _[1]=[(-g*l2)*d^2+(-g^2),(-g*l1)*d^2+(-g^2),
         (-l1*l2)*d^4+(-g*l1-g*l2)*d^2+(-g^2)]
[5]:
   left inverse to image representation:
[6]:
   _[1,1]=(-l1)/(g^2*l1-g^2*l2)
   _[1,2]=(l2)/(g^2*l1-g^2*l2)
   _[1,3]=0
[7]:
   dimension of the system:
[8]:
   1
[9]:
   Parameter constellations which might lead to a non-controllable system:
[10]:
   [1]:
      g
   [2]:
      l1-l2
\end{verbatim}
\normalsize

As one can see, in the output of the procedure we provide both textual comments on the properties of a system
and the corresponding data. The heuristics says that the modules $\Ext^i_{r1}(R,r1)$
of a transposed module indeed vanish for $i\geq 1$ ($-1$ is returned in this situation). Hence, the system is generically controllable
(the notion of strong controllability from above coincides with classical controllability for systems of ordinary differential equations).
Moreover, the procedure computes the image representation, left inverse to the image representation and the dimension of the system. The 10-th item is the output of the procedure \texttt{genericity}, that is, a list of strings. 
The polynomial obstruction to genericity in this example is $\ell_1 - \ell_2$. The monomial
obstruction $g$ is not physically admissible. \\

Let us analyze the properties of the system in the non--generic case $\ell_1 = \ell_2 = \ell$.
We do this with the help of the following code in the same \textsc{Singular} session:
\small
\begin{verbatim}
ring r2 = (0,g,l),(d),(c,dp);
module RR = [d^2+g/l, 0, -g/l], [0, d^2+g/l, -g/l];
module R = transpose(RR);
list L = canonize(control(R));
L;
\end{verbatim}
\normalsize

We get the following output:
\small
\begin{verbatim}
[1]:
   number of first nonzero Ext:
[2]:
   1
[3]:
   not controllable , image representation for controllable part:
[4]:
   _[1]=[(g),(g),(l)*d2+(g)]
[5]:
   kernel representation for controllable part:
[6]:
   _[1]=[0,1]
   _[2]=[1]
[7]:
   obstruction to controllability
[8]:
   _[1]=[0,1]
   _[2]=[(-l)*d2+(-g)]
[9]:
   annihilator of torsion module (of obstruction to controllability)
[10]:
   _[1]=(-l)*d2+(-g)
[11]:
   dimension of the system:
[12]:
   1
\end{verbatim}
\normalsize

We see that the system is not controllable, since it contains a
torsion submodule annihilated by $\langle \ell \d^2 + g \rangle$. However,
we give both image and kernel representations for the controllable part
of the system and describe the  obstruction to controllability explicitly.
Now, we are interested in the autonomy analysis of this non--controllable
system, what can be achieved with the following code:
\small
\begin{verbatim}
list A = canonize(autonom(R));
A;
\end{verbatim}
\normalsize

This gives us the following output:
\small
\begin{verbatim}
[1]:
   number of first nonzero Ext:
[2]:
   0
[3]:
   not autonomous
[4]:
   kernel representation for controllable part
[5]:
   _[1]=[0,1]
   _[2]=[(-l)*d2+(-g),-1]
   _[3]=[(g)]
[6]:
   column rank of the matrix
[7]:
   2
[8]:
   dimension of the system:
[9]:
   1
\end{verbatim}
\normalsize

Since the $0$--th $\Ext$ module of the system module $RR$ (in other words, $\Hom_{r2}(RR,r2)$) does not vanish, the
system is not autonomous. In addition, we compute a kernel representation for the controllable part,
the column rank of the presentation matrix and the dimension of the system.
\end{example}

Parametric systems quite often are generically controllable and
contain an autonomous subsystem for some special values of parameters.
In the following example, we show that also a generically autonomous
system might be controllable in a non--generic case.

\newpage

\begin{example}
\label{exGenAut}
Let $R=\K(a,b)[\d]$ be a ring. A module $N = R/ \langle a \d + b \rangle$ is generically
autonomous. However, 

if $a=0, b\not=0$, then $M=0$ and thus $M$ is autonomous, \\

if $a=0, b=0$, then $M$ is free of rank 1 and hence $M$ is controllable.

\end{example}

A general system might specialize to controllable and
autonomous system in non--generic cases, as the next example shows.

\begin{example}
\label{exNgenAC}

Let $R=\K(a,b)[\d]$ be a ring. Consider a module 
$M = R^2/ \begin{pmatrix} 0 & 0 \\ 0 & a \d + b \end{pmatrix}$.
Generically, it is neither controllable nor autonomous, the
annihilator of a torsion submodule is $\langle a \d + b \rangle$.

The stratification of $M$ with respect to parameters looks as follows:

if $a\not=0, b=0$, a torsion submodule of $M$ is annihilated by $\langle \d \rangle$, \\

if $a=0, b\not=0$, then $M$ is free of rank 1, \\

if $a=0, b=0$, then $M$ is free of rank 2. \\

Assume, that $a,b \in \mathcal{D} \supseteq \K$. Then the space of parameters $\mathcal{D}^2$ decomposes into a
direct sum of subspaces $G \uplus E_1 \uplus E_2 \uplus E_3$, where
$G = \{(a,b) \mid a\not =0, b\not=0\}$,
$E_1 = \{(a,b) \mid a=0, b\not =0\}$,
$E_2 = \{(a,b) \mid a\not =0, b=0\}$ and
$E_3 = \{(a,b) \mid a =0, b =0\}$. Denote by $\bar{E}$ the closure of $E$, then $\dim \bar{E}_1 = \dim \bar{E}_2 = 1$, $\dim \bar{E}_3 = 0$ in $\mathcal{D}^2$. Hence, all $\bar{E}_i$ have measure 0 and $\bar{G} = \mathcal{D}^2$ has measure 1.
\end{example}

\begin{remark}
There are packages like $D$--modules for \textsc{Macaulay2}, \cite{dmodMac2}, and 
\textsc{OreModules} for \textsc{Maple}, \cite{OreModules}, which have a functionality
to treat some of the problems above. The latter package provides the possibility to reveal dangerous parametric denominators via the computation of generalized inverse.
\end{remark}

\section{Example: Two Pendula, Mounted on a Cart}

Consider the Example 5.2.28 from \cite{PW} (see also the examples and solutions to them in \cite{OreModules}) describing two pendula, mounted on a cart.
  

In this example, $m_i$ is the mass and $L_i$ is the length of the $i$--th pendula. 
Respectively, $k_i$ and $d_i$ are the coefficients, characterizing the friction 
at the joints of pendula. $M_0$ denotes the mass of the cart and $g$ is a gravitational constant. 
All these parameters can take only non--negative values.

Let us denote $z_i := k_i - m_i L_i g$ for $i=1,2$. Then the presentation matrix for a system module is constituted by the raws of the following matrix
$$
\begin{pmatrix}
m_{1}L_{1}\d^{2} & m_{2}L_{2}\d^{2} & (m_{1}+m_{2}+M_0)\d^{2} & -1 \\
m_{1}L_{1}^{2}\d^{2}+d_{1}\d+z_{1} & 0 & m_{1}L_{1}\d^{2} & 0 \\
0 & m_{2}L_{2}^{2}\d^{2}+d_{2}\d+z_{2} & m_{2}L_{2}\d^{2} & 0
\end{pmatrix}
$$

We take the transposed module of the matrix. 
It is convenient to consider the columns of the matrix above
as the generators of submodule of a free module. Since the 
last generator then is just $(-1,0,0)^T$, we perform
reduction and simplification of first components with respect to 
this generator. In such a way we obtain much easier presentation matrix.

\subsection{Negligible Friction}
\label{Nofriction}

Let us assume, that the friction is negligible (that is, $d_i=0$ and $k_i=0$). 
We get the simplified presentation matrix of the transposed module as follows:

$$\left(
\begin{array}{*{4}{c}}
L_1 \d^2 - g & 0 & m_{1}L_{1} \d^{2} \\
0 & L_2 \d^2 -g & m_{2}L_{2} \d^{2} 
\end{array}
\right)
$$

The generic reduced minimal Gr\"obner basis is the $2 \times 2$ identity matrix. 
With the \textsc{Lift} algorithm we obtain the transformation matrix

$$\left(
\begin{array}{*{4}{c}}
\dfrac{L_1 L_2}{g^2 L_1-g^2 L_2}\d^2-\dfrac{1}{g} & -\dfrac{m_1 L_1 L_2}{g^2 m_2 L_1-g^2 m_2 L_2} \d^2 \\
{}_{} & {}_{} \\
\dfrac{m_2 L_1 L_2}{g^2 m_1 L_1-g^2 m_1 L_2}\d^2 & -\dfrac{L_1 L_2}{g^2 L_1-g^2 L_2}\d^2-\dfrac{1}{g} \\
{}_{} & {}_{} \\
-\dfrac{L_1 L_2}{g^2 m_1 (L_1- L_2)} \d^2+\dfrac{L_1}{g m_1 (L_1 - L_2)} & \dfrac{L_1 L_2}{g^2 m_2 (L_1- L_2)} \d^2-\dfrac{L_2}{g m_2 (L_1- L_2)}
\end{array}
\right)
$$

Collecting the denominators, we can see that their $lcm$ is $m_1 m_2 g^2 (L_1 - L_2)$.
Since $m_i$ and $g$ are strictly positive, the only obstruction to genericity appears when $L_1 - L_2=0$.\\

Indeed, in the case $L_1 = L_2 = L$ the generic Gr\"obner basis is
$\begin{pmatrix}
0 &             1 \\
L \d^2-g & \frac{m_2}{m_1}
\end{pmatrix}$, hence the system is not controllable. The torsion submodule
is annihilated by the ideal $\langle L \d^2-g \rangle$, but the system is not
completely autonomous.

\subsection{Essential Friction}
\label{Friction}

Now, all the parameters are strictly positive. The simplified presentation matrix of the transposed module is the following

$$\left(
\begin{array}{*{4}{c}}
L_1 \d^2 + d'_{1}\d+z'_{1} & 0 & m_{1}L_{1} \d^{2} \\
0 & L_2 \d^2 + d'_{2}\d+z'_{2} & m_{2}L_{2} \d^{2} 
\end{array}
\right)
$$

where $z'_i := \frac{z_i}{m_i L_i} = \frac{k_i}{m_i L_i} - g$ and $d'_i := \frac{d_i}{m_i L_i}$
for $i=1,2$. \\

The generic reduced minimal Gr\"obner basis is the $2 \times 2$ identity matrix. The output of the Algorithm \ref{AlgGenericity} delivers the list of three polynomials $\{z'_1, z'_2, P\}$,
where 
$$P = L_{1}^{2}{z'}_{2}^{2}-2L_{1}L_{2}z'_{1}z'_{2}-L_{1}d'_{1}d'_{2}z'_{2}+L_{1}{d'}_{2}^{2}z'_{1}+L_{2}^{2}{z'}_{1}^{2}+L_{2}{d'}_{1}^{2}z'_{2}-L_{2}d'_{1}d'_{2}z'_{1}.$$

$z'_i = 0$ means, that $k_i = m_i L_i g$. This is physically admissible situation. Let us analyze $P$ for the admissibility. Indeed, $P$ is irreducible but it has a special form, namely
\begin{equation}
\label{Peq}
P = (L_2 z'_1-L_1 z'_2)^2 + (L_2 d'_1- L_1 d'_2)\cdot(d'_1 z'_2-d'_2 z'_1).
\end{equation}

In particular, $P$ vanishes if both $z'_1$ and $z'_2$ do, so $P$ is admissible.


The stratification consists of 6 cases, namely
\begin{enumerate}
\item $k_1 = m_1 L_1 g, k_2 = m_2 L_2 g, P = 0$
\item $k_1 = m_1 L_1 g, k_2 \not= m_2 L_2 g, P = 0$
\item $k_1 = m_1 L_1 g, k_2 \not= m_2 L_2 g, P \not= 0$
\item $k_1 \not= m_1 L_1 g, k_2 = m_2 L_2 g, P = 0$
\item $k_1 \not= m_1 L_1 g, k_2 = m_2 L_2 g, P \not= 0$
\item $k_1 \not= m_1 L_1 g, k_2 \not= m_2 L_2 g, P = 0$
\end{enumerate}

The setup for \textsc{Singular} treatment of the cases is
the following:

\small
\begin{verbatim}
LIB "control.lib";
ring T = (0,g),(m1,m2,L1,L2,d1,d2,k1,k2),dp;
poly P = k1^2*L2^4*m2^2-2*k1*k2*L1^2*L2^2*m1*m2-k1*d1*d2*L2^2*m2+
k1*d2^2*L1^2*m1+2*k1*g*L1^2*L2^3*m1*m2^2-2*k1*g*L1*L2^4*m1*m2^2+
k2^2*L1^4*m1^2+k2*d1^2*L2^2*m2-k2*d1*d2*L1^2*m1-2*k2*g*L1^4*L2*m1^2*m2+
2*k2*g*L1^3*L2^2*m1^2*m2-d1^2*g*L2^3*m2^2+d1*d2*g*L1^2*L2*m1*m2+
d1*d2*g*L1*L2^2*m1*m2-d2^2*g*L1^3*m1^2+g^2*L1^4*L2^2*m1^2*m2^2-
2*g^2*L1^3*L2^3*m1^2*m2^2+g^2*L1^2*L2^4*m1^2*m2^2;
poly z1 = k1 - m1*L1*g;
poly z2 = k2 - m2*L2*g;
\end{verbatim}
\normalsize

In particular, we can see the expression for $P$ in terms of original
variables. The name of a ring, where the interesting parameters live
as polynomials, is $T$. In \textsc{Singular}, we can switch between
different rings and also map objects. \\

Case 1). $k_1 = m_1 L_1 g, k_2 = m_2 L_2 g, P = 0$. \\
Note that these three equations describe an algebraic variety, that
is a closed set. The Gr\"obner basis of the ideal $k_1 - m_1 L_1 g, k_2 - m_2 L_2 g, P$
is $k_1 - m_1 L_1 g, k_2 - m_2 L_2 g$, since $P$ vanishes, when both
$k_1 = m_1 L_1 g$ and $k_2 = m_2 L_2 g$. Hence, it suffices to plug the
values for $k_i$ in the corresponding system. For this, we run the following code:
\small
\begin{verbatim}
ring r1 = (0,g,m1,m2,L1,L2,d1,d2,k1,k2),(d),(c,dp);
poly z1 = 0; poly z2 = 0;
module RR =  
          [m1*L1^2*d^2+d1*d+z1, 0, m1*L1*d^2],
          [0, m2*L2^2*d^2+d2*d+z2, m2*L2*d^2];
module R = transpose(RR);
list LC = canonize(control(R));
list LA = canonize(autonom(R));
\end{verbatim}
\normalsize

From the output of \texttt{control} and \texttt{autonom} procedures,
we conclude, that this system is neither controllable nor autonomous. 
In particular, the torsion submodule is annihilated by $\langle \d \rangle$. \\

Case 2). $k_1 = m_1 L_1 g, k_2 \not= m_2 L_2 g, P = 0$. \\
Here we deal with the locally closed set 
$V(\langle k_1 - m_1 L_1 g, P \rangle) \setminus V(\langle k_2 - m_2 L_2 g\rangle) $.
Using the following code, we get its better description. We employ a technical trick
by modifying a ground ring in such a way, that $k_i$ have priority over the rest
of polynomials. In such a way during the computations the relation $k_1 = m_1 L_1 g$
will be used as replacing $k_1$ with $m_1 L_1 g$. This is achieved by using
a different ordering like e.g. the \textbf{elimination} ordering (see e.g. \cite{GPS}) for $k_1,k_2$.
\small
\begin{verbatim}
ring T2 = (0,g),(k1,k2,m1,m2,L1,L2,d1,d2),(a(1,1),dp);
poly z1 = ...; poly z2 = ...; poly P = ...; // we copy them from above
ideal I2 = P,z1;   
I2 = groebner(I2); 
facstd(I2,z2);
\end{verbatim}
\normalsize

The output of \texttt{facstd} command gives us the only component
\small
\begin{verbatim}
[1]:
   _[1]=k1+(-g)*m1*L1
   _[2]=k2*m1^2*L1^4+(-g)*m1^2*m2*L1^4*L2+m2*L2^2*d1^2-m1*L1^2*d1*d2
\end{verbatim}
\normalsize

We are able to extract e.g. $k_2$ from the last equation explicitly:

\[
k_{2} = m_{2}L_{2}g+ \dfrac{m_{1}L_{1}^{2}d_{2}-m_{2}L_{2}^{2}d_{1}}{m_{1}^{2}L_{1}^{4}} d_1
\]

Alternatively, we can express $d_2$ in terms of variables $m_i, L_i, k_2, d_1$.

Computing with substitutions, we see that this system is neither 
controllable nor autonomous. 
The torsion submodule is annihilated by $\langle m_1 L_1^2 \d ^2 + d_1 \d \rangle$. \\

Case 3). $k_1 = m_1 L_1 g, k_2 \not= m_2 L_2 g, P \not= 0$.\\

We use the computations of the case 2 and describe a locally closed set via
the following system of equations and inequations
\[
k_1 = m_1 L_1 g, k_2 - m_2 L_2 g \not=0, k_2 - m_2 L_2 g \not= \frac{m_{1}L_{1}^{2}d_{2}-m_{2}L_{2}^{2}d_{1}}{m_{1}^{2}L_{1}^{4}} d_1
\]

In order to treat both inequations involving $k_2 - m_2 L_2 g$, we introduce
a new parameter $u$ (thus, $u$ is mutually non--zero in the ground field) and plug in the transposed system module the fake equation $k_2 - m_2 L_2 g = u$.

Also this system is generically neither controllable nor autonomous. 
The torsion submodule is annihilated by $\langle \d \rangle$. Compare
with the annihilator for the case 2, which is $\langle m_1 L_1^2 \d ^2 + d_1 \d \rangle$.
Let us investigate, for which $u$ the properties change.
\small
\begin{verbatim}
LIB "control.lib";
ring r3 = (0,g,m1,m2,L1,L2,d1,d2,k1,k2,u),(d),(c,dp);
poly z1 = 0; poly z2 = u;
module RR =  
             [m1*L1^2*d^2+d1*d+z1, 0, m1*L1*d^2],
             [0, m2*L2^2*d^2+d2*d+z2, m2*L2*d^2];
module R = transpose(RR);
module S = groebner(R);
matrix T = lift(R,S);
genericity(T);
\end{verbatim}
\normalsize

The output of \texttt{genericity} delivers
\small
\begin{verbatim}
[1]:
   u,m2,L2,d1
[2]:
   m1^2*L1^4*u-m1*L1^2*d1*d2+m2*L2^2*d1^2
\end{verbatim}
\normalsize

That is, the generic annihilator of a torsion submodule of the
system subject to constraints $k_1 - m_1 L_1 g =0, k_2 - m_2 L_2 g =u \not=0$
is indeed $\langle \d \rangle$. However, if 
$u = k_2 - m_2 L_2 g = \frac{m_{1}L_{1}^{2}d_{2}-m_{2}L_{2}^{2}d_{1}}{m_{1}^{2}L_{1}^{4}} d_1$,
the non--generic annihilator equals $\langle m_1 L_1^2 \d ^2 + d_1 \d \rangle$. This
illustrates the difference between two components, corresponding to cases 2 and 3. \\

Case 4). $k_1 \not= m_1 L_1 g, k_2 = m_2 L_2 g, P = 0$ and \\

Case 5). $k_1 \not= m_1 L_1 g, k_2 = m_2 L_2 g, P \not= 0$.

The simplified presentation matrix for the transposed module
is symmetric, that is, exchanging $m_1 \leftrightarrow m_2$,
$L_1 \leftrightarrow L_2$, $d_1 \leftrightarrow d_2$ and
$k_1 \leftrightarrow k_2$ simultaneously does not change the matrix. Hence,
we can take the results of case 2 respectively case 3, exchange the
variables and get the results for case 4 respectively case 5. \\

Case 6). $k_1 \not= m_1 L_1 g, k_2 \not= m_2 L_2 g, P = 0$.

Recall the special structure of a polynomial $P$ in (\ref{Peq}).
It is easy to see, that if $P=0$ and one of the two summands of $P$ is zero, 
so does the other. This observation lead us to the first conclusion:

\[
P = 0, \text{ if  } \dfrac{L_2}{L_1} = \dfrac{z'_2}{z'_1} = \dfrac{d'_2}{d'_1}.
\]

Going back to the original variables, it translates into

\begin{equation}
\label{FracEq}
\frac{m_2 L^2_2}{m_1 L^2_1} = \frac{k_2 - m_2 L_2 g}{k_1 - m_1 L_1 g} = \frac{d_2}{d_1}.
\end{equation}

This is especially interesting, since the values, found in \cite{PW} for showing
the non--generic non--controllability, were $m_1 = m_2 = M_0 = 1$, $L_1 = L_2 = 1$, $d_1 = d_2 = 1$ and $k_1 = k_2 = k$. As we can see, it suffices to set $m_1 = m_2$, $d_1 = d_2$, $k_1 = k_2 \not= m_2 L_2 g$ and $L_1 = L_2$ for illustrating this phenomenon.

Let us denote by a parameter $t$ the value of the fractions in \ref{FracEq}. Then,
\[
 d_2 = t \cdot d_1, \; k_2 = t \cdot k_1 + (m_2 L_2 - t \cdot m_1 L_1)g, \;  m_2 L_2^2 = t \cdot m_1 L_1^2
\]

We do the substitutions for $d_2$ and $k_2$. As a preprocessing before Gr\"obner bases, we can manipulate the generators. Consider the last generator of a transposed module, that is, the last column of the transposed presentation matrix $(m_1 L_1 \d^2, m_2 L_2 \d^2)^T$. By multiplying the column with $L_2$, we can simplify it subject to the substitution to the column $(L_2 \d^2, t L_1 \d^2)^T$. The second generator of the module becomes then $(0,t\cdot(m_1 L_1^2 \d^2+d_1 \d+z_1))^T$, from which we cancel the parameter $t$ out. With the following code we perform the controllability and the autonomy analysis for this particular case.

\small
\begin{verbatim}
ring r6 = (0,g,t,m1,L1,L2,d1,k1),(d),(c,dp);
poly z1 = k1 - m1*L1*g;
module RR =  
          [m1*L1^2*d^2+d1*d+z1, 0, L2*d^2],
          [0, m1*L1^2*d^2+d1*d+z1, t*L1*d^2];
module R = transpose(RR);
print(R);
list LC = canonize(control(R));
list LA = canonize(autonom(R));
\end{verbatim}
\normalsize

We conclude that this system is neither controllable nor autonomous.
In particular, the annihilator of the torsion submodule is the ideal
$\langle m_{1}L_{1}^{2}\d^{2}+d_{1}\d+k_{1}-m_{1}L_{1}g \rangle$. Note that in view
of the equation (\ref{FracEq}), we obtain the equivalent symmetric annihilator $\langle m_{2}L_{2}^{2}\d^{2}+d_{2}\d+k_{2}-m_{2}L_{2}g \rangle$ by e.g. multiplying the previous annihilator with the constant $t$.\\

Now let us assume, that $P=0$ but neither of its summands vanishes.
The polynomial $P$ is quadratic with respect to any of the variables $m_1, m_2, k_1, k_2, d_1, d_2$
 and is quartic with respect to $L_1$ and $L_2$. 
Let us fix one of the variables $m_1, m_2, k_1, k_2, d_1, d_2$. Consider the rest of variables as parameters and compute the discriminant of a corresponding quadratic equation. Since the involved variables might have only positive real values, we obtain a condition on the discriminant of a quadratic equation. If we fix $m_1, k_1$ or $d_1$, we get $\frac{d_2^2}{4m_2 L_2^2} \geq k_2 - m_2 L_2 g$. For fixed $m_2, k_2$ or $d_2$, we obtain, either by a direct computation or via the symmetry, that $\frac{d_1^2}{4m_1 L_1^2} \geq k_1-m_1 L_1 g$. Notably, both inequalities
cannot become equalities simultaneously.

Provided $\frac{d_2^2}{4m_2 L_2^2} \geq k_2 - m_2 L_2 g$, the explicit solution with respect to, say, $d_1$ gives the following expression (recall, we use the short notation $z_i = k_i - m_i L_i g$):
\[
d_1 = \dfrac{d_2(m_2 L_2^2 z_1 + m_1 L_1^2 z_2) \pm (m_2 L_2^2 z_1 -  m_1 L_1^2 z_2) \sqrt{d_2^2 - 4m_2 L_2^2 z_2}}{2 m_2 L_2^2 z_2}
\]

Each root corresponds to a separate system. Substituting the roots into our system, we obtain, that as in all previous cases, it is neither controllable nor autonomous. The annihilators of torsion submodules are then $\langle 2 m_2 L_2^2 \d + d_2 \pm \sqrt{d_2^2 - 4m_2 L_2^2 z_2} \rangle$. The annihilators with respect to $m_1,d_1,L_1,z_1$ we obtain by the symmetry.

Finally, we summarize the obtained results.

\begin{proposition}
The complete stratification of the obstructions to genericity for the generically controllable system with the essential friction is obtained. All the components
of the stratification correspond to non--controllable and non--autonomous systems, whose torsion submodules are annihilated by one of the ideals (for $i=1,2$)
\[
\langle m_{i}L_{i}^{2}\d^2+d_{i}\d \rangle, \;\langle m_{i}L_{i}^{2}\d^{2}+d_{i}\d+k_{i}-m_{i}L_{i}g \rangle, \; \langle \d \rangle,
\]

and $\langle 2 m_i L_i^2 \d + d_i \pm \sqrt{d_i^2 - 4m_i L_i^2 (k_i - m_i L_i g)} \rangle$, provided 
 $k_i \leq \frac{d_i^2}{4m_i L_i^2} + m_i L_i g$.
\end{proposition}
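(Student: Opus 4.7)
The plan is to assemble and verify the stratification case by case, using the output of the \textsc{Genericity} algorithm together with \textsc{StratifyLC}. Recall that \textsc{Genericity} applied to the simplified transposed presentation matrix returned the three irreducible obstructions $\{z'_1, z'_2, P\}$. After discarding the physically inadmissible sign patterns and noting that $z'_1 = z'_2 = 0$ already forces $P=0$ (eliminating the hypothetical case $z'_1 = z'_2 = 0$, $P \neq 0$), the seven sign combinations reduce to exactly the six cases enumerated above. Thus completeness of the stratification reduces to proving the stated properties on each component.

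For cases 1--5, the non-controllability and non-autonomy, together with the listed annihilators $\langle \d \rangle$ and $\langle m_i L_i^2 \d^2 + d_i \d \rangle$, follow from the explicit runs of \textsc{control} and \textsc{autonom} on the appropriately specialized presentation matrix, as carried out above. Cases 4 and 5 are obtained from cases 2 and 3 by the simultaneous swap $m_1 \leftrightarrow m_2$, $L_1 \leftrightarrow L_2$, $d_1 \leftrightarrow d_2$, $k_1 \leftrightarrow k_2$ leaving the presentation matrix invariant. First one verifies that this swap genuinely maps the defining equations and inequations of case 2 (resp.\ 3) onto those of case 4 (resp.\ 5), so that the symmetry argument is rigorous and the corresponding annihilators appear with the index $i=2$.

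The main obstacle is case 6, where $P=0$ is an irreducible codimension-one condition and the subvariety must be parametrized. The strategy is to exploit the decomposition (\ref{Peq}) of $P$ as a sum of two products. A real solution of $P=0$ with $z'_1 z'_2 \neq 0$ falls into two mutually exhaustive branches: either both summands vanish simultaneously, which is equivalent to the common ratio condition (\ref{FracEq}) and hence parametrized by $t = L_2/L_1 = d_2/d_1 = z'_2/z'_1$; or exactly one summand is nonzero and the equation $P(d_1) = 0$ is a genuine quadratic in $d_1$ (with the other parameters fixed). In the first branch, substituting the relations into the presentation matrix and invoking \textsc{control}/\textsc{autonom} yields the annihilator $\langle m_1 L_1^2 \d^2 + d_1 \d + k_1 - m_1 L_1 g \rangle$, which by the symmetry $t \mapsto 1/t$ coincides up to a scalar with its counterpart for $i=2$. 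In the second branch, solving the quadratic produces the explicit root formula for $d_1$, and the physical admissibility of the square root is precisely the inequality $d_i^2 - 4 m_i L_i^2 (k_i - m_i L_i g) \geq 0$, i.e.\ $k_i \leq \frac{d_i^2}{4 m_i L_i^2} + m_i L_i g$. Substituting each root and running the control/autonomy analysis delivers the remaining annihilators $\langle 2 m_i L_i^2 \d + d_i \pm \sqrt{d_i^2 - 4 m_i L_i^2 (k_i - m_i L_i g)} \rangle$.

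Combining these verifications exhausts the cases of the stratification and certifies both the non-controllability and non-autonomy of each component together with the listed annihilator, completing the proof.
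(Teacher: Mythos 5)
Your proposal reconstructs the paper's argument faithfully: the proposition is a summary of the case analysis in Section 4.2, and you identify the same $2^3-1=7$ potential sign patterns on $\{z_1',z_2',P\}$, eliminate the vacuous one since $z_1'=z_2'=0\Rightarrow P=0$, treat cases 1--5 by direct specialization plus the $1\leftrightarrow 2$ symmetry, and split case 6 via the decomposition (\ref{Peq}) into the both-summands-vanish branch and the quadratic-root branch. This is the paper's route.

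Two small slips should be fixed, though neither undermines the structure. First, you write ``exactly one summand is nonzero'': if $P=0$ then one summand of (\ref{Peq}) vanishes if and only if the other does, so that subcase is void; the correct dichotomy is ``both summands vanish'' versus ``neither vanishes.'' Second, your parametrization $t = L_2/L_1 = d_2/d_1 = z_2'/z_1'$ mixes the primed and unprimed scales inconsistently: the common ratio is either $L_2/L_1 = d_2'/d_1' = z_2'/z_1'$ in the primed variables, or, as the paper normalizes it, $t = m_2L_2^2/(m_1L_1^2) = d_2/d_1 = z_2/z_1$; the equality $L_2/L_1 = d_2/d_1$ only holds when $m_1L_1 = m_2L_2$. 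With those corrections your write-up matches the paper's proof.
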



\section{Conclusion and Future Work}

We have investigated the parameter-dependence of structural properties 
(such as torsion-freeness) of modules 
over certain algebras over ${\K}(p_1,\ldots,p_t)$,
where $\K$ is a ground field and $p_i$ are parameters.
The central idea is to keep track of all polynomial expressions in the
$p_i$ that occur as denominators during Gr\"obner basis computation. 
These problems have practical applications in control theory as outlined in
the Introduction. We have shown several nontrivial phenomena
that arise with these questions in terms of illustrative worked 
examples. Our goal for the future is to extend this approach to 
the study of more general parametric module 
properties, leading to the implementation of systematic procedures for such problems. \\

In particular, one is interested in working with parameters, on
which the involved operators act nontrivially. That
is, the parameters may correspond to \\
($q$--)differentiable and/or ($q$--)shiftable
functions. Then, the field $\K(p_1,\ldots,p_t)$ must be a differential
and/or a difference field. The obstructions to genericity are then presented
as systems of differential--difference algebraic equations (DDAE) instead of
just algebraic equations treated in this article. Though the main principles
remain the same, there is a strong need for specialized techniques and 
systematic computer--algebraic support for both theoretical and implementational parts
of the further research in this area. The case of differentiable parameters was treated in the
articles \cite{PQ1997,PQ2000}, the software package \textsc{OreModules} \cite{OreModules}
seems to be able to provide computational support for this case. \\

Yet another important direction of investigation is the analysis of numerical phenomena,
namely inexact computations with parameters defined as floating point numbers or as certain inequalities. The generalization of our approach to these domains seems to be possible with the help of e.g. cylindrical algebraic decomposition techniques. Alternatively, one may first
obtain an exact symbolic solution to parametric problem, say, in form of the complete
stratification, and postprocess it with numerical or symbolical--numerical tools. \\

\textbf{Acknowledgements}. The authors are grateful to A.~Quadrat and J.-F.~Pommaret for discussions on theoretical methods as well as particular applications. The first author is grateful
to the SFB project F1301 of the Austrian FWF for partial financial support.


\bibliographystyle{abbrv}                       %
\bibliography{database}                             %

\end{document}